\newlist{partenum}{enumerate}{1}
\setlist[partenum,1]{label=(\arabic*)}
\crefname{partenumi}{part}{parts}
\theoremstyle{plain}
\newtheorem*{theorem*}{Theorem}
\newtheorem{theorem}{Theorem}[section]
\newtheorem{proposition}[theorem]{Proposition}
\newtheorem{lemma}[theorem]{Lemma}
\newtheorem{corollary}[theorem]{Corollary}
\theoremstyle{definition}
\newtheorem{definition}[theorem]{Definition}
\newtheorem{example}[theorem]{Example}
\theoremstyle{remark}
\newtheorem{remark}[theorem]{Remark}
\renewcommand{\b}{\mathrm{b}}
\newcommand{\h}{\mathrm{h}}
\renewcommand{\t}{\mathrm{t}}
\newcommand{\w}{\mathrm{w}}
\newcommand{\op}{\mathrm{op}}
\renewcommand{\L}{\mathrm{L}}
\newcommand{\add}{\textnormal{add}}
\newcommand{\ex}{\textnormal{ex}}
\newcommand{\fin}{\textnormal{fin}}
\newcommand{\wex}{\textnormal{w-ex}}
\newcommand{\Fun}{\operatorname{Fun}}
\newcommand{\Hom}{\operatorname{Hom}}
\newcommand{\Kb}{\operatorname{K}^{\b}}
\newcommand{\Ndg}{\operatorname{N}_{\textnormal{dg}}}
\renewcommand{\P}{\operatorname{\cat{P}}}
\newcommand{\Sp}{\operatorname{Sp}}
\newcommand{\cat}[1]{\mathcal{#1}}
\newcommand{\Cat}[1]{\mathsf{#1}}
\newcommand{\cls}[1]{\mathbf{#1}}
\title{The weight complex functor is symmetric monoidal}
\author{Ko Aoki}
\address{
    Department of Mathematics,
    Tokyo Institute of Technology, 2--12--1 \=Ookayama, Meguro-ku,
    Tokyo 152--8551, Japan
}
\email{aoki.k.an@m.titech.ac.jp}
\date{\today}
\begin{document}

\begin{abstract}
    Bondarko's (strong) weight complex functor
    is a triangulated functor from Voevodsky's triangulated
    category of motives
    to the homotopy category of chain complexes of classical Chow motives.
    Its construction is valid for any dg enhanced triangulated
    category equipped with a weight structure. In this paper we consider
    weight complex functors in the setting of stable
    symmetric monoidal $ \infty $-categories.
    We prove that the weight complex functor is symmetric monoidal under
    a natural compatibility assumption. To prove this result,
    we develop additive and stable symmetric monoidal
    variants of the $ \infty $-categorical
    Yoneda embedding, which may be of independent interest.
\end{abstract}

\maketitle

\section{Introduction}\label{9d44b04442}

In the paper \autocite{Bondarko10}, Bondarko introduced
the notion of a weight structure
(see \cref{8d18b2095c}) on a triangulated category
as a variant of $ \t $-structure
and then constructed a (strong) weight complex functor when the triangulated category
has a dg enhancement.
One primary example of a weight structure is
the motivic weight structure on
$ \operatorname{DM}_{\textnormal{gm}}^{(\textnormal{eff})}(k;\mathbb{Q}) $,
whose existence was proven in \autocite[Section~6]{Bondarko10}.
The weight complex functor associated to that is
a functor $ \operatorname{DM}_{\textnormal{gm}}^{(\textnormal{eff})}(k;\mathbb{Q})
\to \Kb(\operatorname{Chow}^{(\textnormal{eff})}(k;\mathbb{Q})) $, which was
studied in \autocite[Section~6]{Bondarko09}.
In a recent preprint \autocite{Sosnilo17}, Sosnilo considered weight structures using
stable $ \infty $-categories and showed that
the weight complex can be constructed in that setting.
In this paper we consider them with symmetric monoidal structures.

Our result mentioned in the title is \cref{7704dfff4b},
which states that the weight complex functor
is symmetric monoidal under a natural compatibility condition.
We note that its dg variant appeared
in \autocite[Lemma~20]{Bachmann},
but the author was unable to fill in the details of the proof
presented there.

Applying this to the motivic weight structure,
we have the following:

\begin{theorem*}
    Let $ k $ be a perfect field.
    Then the weight complex functor
    $ \operatorname{DM}_{\textnormal{gm}}^{(\textnormal{eff})}(k;\mathbb{Q})
    \to \Kb(\operatorname{Chow}^{(\textnormal{eff})}(k;\mathbb{Q})) $ is
    symmetric monoidal.
\end{theorem*}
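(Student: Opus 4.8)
The plan is to deduce this statement from the general \cref{7704dfff4b} by exhibiting the motivic data as an instance of the abstract setup. First I would fix a stable symmetric monoidal $ \infty $-categorical enhancement $ \mathcal{D} $ of $ \operatorname{DM}_{\textnormal{gm}}^{(\textnormal{eff})}(k;\mathbb{Q}) $; for $ k $ perfect such an enhancement exists via the standard symmetric monoidal models for the triangulated category of motives, whose homotopy category recovers $ \operatorname{DM}_{\textnormal{gm}}^{(\textnormal{eff})}(k;\mathbb{Q}) $ together with its tensor triangulated structure. By \autocite[Section~6]{Bondarko10} this homotopy category carries the motivic (Chow) weight structure, and its heart is canonically identified with $ \operatorname{Chow}^{(\textnormal{eff})}(k;\mathbb{Q}) $. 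I would transport this weight structure along the equivalence to $ \mathcal{D} $, so that the abstract weight complex functor of the hypothesis becomes exactly the sought functor $ \operatorname{DM}_{\textnormal{gm}}^{(\textnormal{eff})}(k;\mathbb{Q}) \to \Kb(\operatorname{Chow}^{(\textnormal{eff})}(k;\mathbb{Q})) $.

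The remaining task is to check the compatibility hypothesis of \cref{7704dfff4b}: the unit object should be of weight zero and the two defining full subcategories of the weight structure should be closed under $ \otimes $. The unit is $ \mathbf{1} = M(\operatorname{Spec} k) $, the motive of a smooth projective variety, hence it lies in the heart and has weight zero. For closure under tensor products I would reduce to generators. The connective cone is the smallest full subcategory containing the Chow motives $ M(X) $ for $ X $ smooth projective that is closed under extensions, the relevant shift, and retracts; since tensoring with a fixed object is exact, it preserves all three closure operations, so a double induction on the generators of the two factors reduces the claim to the single computation $ M(X) \otimes M(Y) \simeq M(X \times Y) $, where $ X \times Y $ is again smooth projective. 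Thus the tensor product of generators is itself a Chow motive, in particular connective, and the same argument applies to the coconnective cone; both cones are therefore closed under $ \otimes $.

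The main obstacle, rather than the combinatorial generator computation above, will be the bookkeeping of the first paragraph: establishing that the chosen $ \infty $-enhancement $ \mathcal{D} $ is genuinely symmetric monoidal, that its homotopy category agrees with $ \operatorname{DM}_{\textnormal{gm}}^{(\textnormal{eff})}(k;\mathbb{Q}) $ as a tensor triangulated category, and that the transported weight structure coincides with Bondarko's motivic one with heart $ \operatorname{Chow}^{(\textnormal{eff})}(k;\mathbb{Q}) $. Once these identifications are in place and the compatibility condition is verified as above, \cref{7704dfff4b} applies directly and yields that the weight complex functor is symmetric monoidal.
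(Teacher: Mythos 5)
Your proposal is correct and takes essentially the same route as the paper: the paper derives this theorem simply by applying \cref{7704dfff4b} to Bondarko's motivic weight structure on a stable symmetric monoidal $\infty$-categorical enhancement of $\operatorname{DM}_{\textnormal{gm}}^{(\textnormal{eff})}(k;\mathbb{Q})$, whose heart is $\operatorname{Chow}^{(\textnormal{eff})}(k;\mathbb{Q})$ and whose compatibility with the tensor structure is exactly what you verify via the generator computation $M(X)\otimes M(Y)\simeq M(X\times Y)$. If anything, your write-up supplies more detail (the compatibility check and the enhancement bookkeeping) than the paper, which states the application without further argument.
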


We note that this theorem is mentioned in \autocite[Remark~37]{Kelly} as
a desired statement which seems to have no written proof.

\subsection*{Outline}

We begin in \cref{5eff74713e}
by recalling some $ \infty $-category theory
which we will need in this paper.
There we give
the definition of an additive symmetric monoidal $ \infty $-category
and prove a version of the Yoneda embedding for it.
In \cref{cca8147ee1}, we review the theory of weight structures
on stable $ \infty $-categories and define the weight complex functor.
\Cref{17df43aa2c} is the main part of this paper.
There we introduce a notion of compatibility between
a symmetric monoidal structure and a weight structure
and prove the main result.

\subsection*{Acknowledgments}

I would like to thank
Thomas Nikolaus for patiently answering my question on
his multiplicative Yoneda lemma.
I am also grateful to Tom Bachmann for his useful comment on
an early draft which reduced the length of this paper.
I also thank
Shane Kelly for suggesting me this problem,
discussing it with me in the dg context,
and making helpful comments which improved the readability of this paper.

\section{Preliminaries from $ \infty $-category theory}\label{5eff74713e}

When we deal with $ \infty $-categories,
we generally follow terminologies and notations used in \autocite{HTT, HA},
but we will regard every category as an $ \infty $-category by taking its nerve.

\subsection{Additive and stable $ \infty $-categories}

We refer readers to \autocite[Chapter~1]{HA} for the theory of stable $ \infty $-categories
and \autocite{GepnerGrothNikolaus15} for the theory of additive $ \infty $-categories.

\begin{definition}\label{f5eeb1acf1}
    Let $ \cat{A} $ be an $ \infty $-category.
    We call $ \cat{A} $ \emph{additive} if it has
    finite products and coproducts and
    its homotopy category is an additive category.

    Let $ \cat{A} $ and~$ \cat{A}' $ be additive $ \infty $-categories and
    $ f \colon \cat{A} \to \cat{A}' $ a functor between them.
    We call $ f $ \emph{additive} if it preserves finite products (or coproducts,
    equivalently).
    We write $ \Fun^{\add}(\cat{A}, \cat{A}') $ for
    the full subcategory of $ \Fun(\cat{A}, \cat{A}') $ spanned by
    additive functors.
\end{definition}

\begin{example}\label{a466051a25}
    Every stable $ \infty $-category is additive.
    More generally, a full subcategory of a stable $ \infty $-category
    closed under finite (co)products is additive.

    Another example of an additive $ \infty $-category is
    (the nerve of) an additive category.
    For an additive $ \infty $-category~$ \cat{A} $,
    the canonical functor
    $ \cat{A} \to \h\cat{A} $ is additive.
\end{example}

Let $ \Cat{Cat}_{\infty}^{\add} $ denote the subcategory of
the large $ \infty $-category of small $ \infty $-categories $ \Cat{Cat}_{\infty} $
whose objects are additive $ \infty $-categories
and morphisms are additive functors.
Similarly, we let $ \Cat{Cat}_{\infty}^{\ex} $ denote the subcategory
of $ \Cat{Cat}_{\infty} $ whose objects are stable $ \infty $-categories and
morphisms are exact functors.

We recall a stable version of the Yoneda embedding.
Here $ \Cat{S} $ and~$ \Cat{Sp} $ denote the large $ \infty $-categories of
spaces and spectra, respectively.

\begin{definition}\label{eab25264e8}
    Let $ \cat{C} $ be a stable $ \infty $-category.
    We write $ \P^{\ex}(\cat{C}) $ for the
    $ \infty $-category $ \Fun^{\ex}(\cat{C}^{\op}, \Cat{Sp}) $,
    which is equivalent to the full subcategory of $ \P(\cat{C})
    = \Fun(\cat{C}^{\op}, \Cat{S}) $
    spanned by left exact functors.
    In this case the Yoneda embedding $ \cat{C} \to \P(\cat{C}) $
    factors through $ \P^{\ex}(\cat{C}) $. We call the functor $ \cat{C}
    \to \P^{\ex}(\cat{C}) $ the \emph{stable Yoneda embedding}.
\end{definition}

There is also an additive version
of the Yoneda embedding.
Let $ \Cat{Sp}_{\geq 0} $ denote the large $ \infty $-category of
connective spectra.
The $ \infty $-category $ \Cat{Sp}_{\geq 0} $
is a full subcategory of the stable $ \infty $-category
$ \Cat{Sp} $ closed under coproducts, hence additive.

\begin{definition}\label{7aa54feb2f}
    Let $ \cat{A} $ be an additive $ \infty $-category.
    We write $ \P^{\add}(\cat{A}) $ for the
    $ \infty $-category $ \Fun^{\add}(\cat{A}^{\op}, \Cat{Sp}_{\geq0}) $,
    which is equivalent to the full subcategory of $ \P(\cat{A}) $ spanned by
    functors which preserve finite products
    (see for example \autocite[Corollary~2.10~(iii) and Example~5.3~(ii)]{GepnerGrothNikolaus15}).
    In this case the Yoneda embedding $ \cat{A} \to \P(\cat{A}) $
    factors through $ \P^{\add}(\cat{A}) $. We call the functor $ \cat{A}
    \to \P^{\add}(\cat{A}) $ the \emph{additive Yoneda embedding}.
\end{definition}

Applying \autocite[Lemmas~5.5.4.18--19]{HTT},
we immediately see that $ \P^{\add}(\cat{A}) $
can be regarded as a strongly reflective subcategory of
$ \P(\cat{A}) = \Fun(\cat{A}^{\op}, \Cat{S}) $,
hence is presentable.
Similarly, $ \P^{\ex}(\cat{C}) $ can be regarded as a
strongly reflective subcategory of $ \P(\cat{C}) $ for
a stable $ \infty $-category~$ \cat{C} $.

\begin{remark}\label{f60e310bc2}
    In fact, these constructions are special cases of \autocite[Definition~5.3.6.5]{HTT}.
    Let $ \cls{K} $, $ \cls{K}^{\ex} $ and~$ \cls{K}^{\add} $
    denote the classes of all simplicial sets which are small,
    finite and finite discrete, respectively.
    Comparing the constructions given here with
    that in the proof of \autocite[Proposition~5.3.6.2]{HTT},
    we can see that $ \P^{\ex} $ and~$ \P^{\add} $ are
    the restrictions of $ \P^{\cls{K}}_{\cls{K}^{\ex}} $
    and~$ \P^{\cls{K}}_{\cls{K}^{\add}} $, respectively.
    As a consequence, we can view them as functors:
    \begin{align*}
        \P^{\ex} &\colon \Cat{Cat}_{\infty}^{\ex} \to \Cat{Pr}^{\textnormal{L},\ex}, &
        \P^{\add} &\colon \Cat{Cat}_{\infty}^{\add} \to \Cat{Pr}^{\textnormal{L},\add}.
    \end{align*}
    Here $ \Cat{Pr}^{\L,\ex} $ and $ \Cat{Pr}^{\L,\add} $ denote
    full subcategories of
    the very large $ \infty $-category $ \Cat{Pr}^{\L} $
    (see \autocite[Definition~5.5.3.1]{HTT} for the definition)
    spanned by exact and additive presentable $ \infty $-categories,
    respectively.
\end{remark}

\begin{lemma}\label{3265f779d1}
    Let $ \cat{A} $ be an additive $ \infty $-category.
    Then
    the exact functor
    $
        \Fun^{\add}(\cat{A}^{\op}, \Cat{Sp})
        \to \Sp(\P^{\add}(\cat{A}))
    $
    induced by
    the truncation functor
    $
        \Fun^{\add}(\cat{A}^{\op}, \Cat{Sp})
        \to \Fun^{\add}(\cat{A}^{\op}, \Cat{Sp}_{\geq0})
        = \P^{\add}(\cat{A})
    $
    is an equivalence of stable $ \infty $-categories.
\end{lemma}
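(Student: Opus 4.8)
The plan is to exhibit both $ \Fun^{\add}(\cat{A}^{\op}, \Cat{Sp}) $ and $ \Sp(\P^{\add}(\cat{A})) $ as one and the same inverse limit of an $ \Omega $-tower of copies of $ \P^{\add}(\cat{A}) $, and then to check that the resulting identification is precisely the functor induced by truncation. First I would record two structural facts. The $ \infty $-category $ \cat{E} = \Fun^{\add}(\cat{A}^{\op}, \Cat{Sp}) $ is stable: fibers and cofibers of additive functors are computed pointwise in $ \Cat{Sp} $, and the product-preservation condition survives because in $ \Cat{Sp} $ finite products agree with finite coproducts, so cofibers commute with them. Likewise $ \P^{\add}(\cat{A}) $ is pointed and presentable (as already noted in the excerpt), and both limits in it and the loop functor $ \Omega $ are computed pointwise, since product-preserving functors into $ \Cat{Sp}_{\geq 0} $ are closed under limits in $ \Fun(\cat{A}^{\op}, \Cat{Sp}_{\geq 0}) $.

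Next I would invoke the standard description of the stabilization as the limit of the $ \Omega $-tower (cf.\ \autocite[\S1.4.2]{HA}): for a pointed presentable $ \infty $-category $ \cat{D} $ one has $ \Sp(\cat{D}) \simeq \lim(\cdots \xrightarrow{\Omega} \cat{D} \xrightarrow{\Omega} \cat{D}) $ in $ \Cat{Cat}_{\infty} $, with $ \Omega^{\infty} $ the projection to the last factor. Applied to $ \cat{D} = \P^{\add}(\cat{A}) $, and using that $ \Omega $ there is pointwise, this tower is obtained by applying $ \Fun^{\add}(\cat{A}^{\op}, -) $ to the tower $ \cdots \xrightarrow{\Omega} \Cat{Sp}_{\geq 0} \xrightarrow{\Omega} \Cat{Sp}_{\geq 0} $ of connective spectra. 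The key computation is that $ \Fun^{\add}(\cat{A}^{\op}, -) $ commutes with this limit: the underlying functor $ \Fun(\cat{A}^{\op}, -) $ is a cotensor in $ \Cat{Cat}_{\infty} $ and hence preserves all limits, while the additivity condition is detected levelwise, because the transition functors $ \Omega $ preserve finite products (being right adjoints), so products in $ \lim_{n} \Cat{Sp}_{\geq 0} $ are computed componentwise and a functor into the limit preserves products exactly when each of its components does. Combining this with the standard identification $ \Cat{Sp} \simeq \Sp(\Cat{Sp}_{\geq 0}) = \lim(\cdots \xrightarrow{\Omega} \Cat{Sp}_{\geq 0}) $, I obtain
\[
    \Fun^{\add}(\cat{A}^{\op}, \Cat{Sp})
    \simeq \Fun^{\add}\bigl(\cat{A}^{\op}, \textstyle\lim_{n} \Cat{Sp}_{\geq 0}\bigr)
    \simeq \lim_{n} \P^{\add}(\cat{A})
    \simeq \Sp(\P^{\add}(\cat{A})).
\]

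Finally I would verify that this chain of equivalences realizes the functor induced by truncation. Under the limit description $ \Omega^{\infty} $ is the projection to the zeroth factor, and tracing through the identifications, the zeroth component of the image of $ F \colon \cat{A}^{\op} \to \Cat{Sp} $ is its connective cover $ \tau_{\geq 0} \circ F $, which is exactly the truncation functor $ \cat{E} \to \P^{\add}(\cat{A}) $; by the universal property of stabilization this pins down the comparison functor up to equivalence. The step I expect to require the most care is the levelwise detection of additivity when interchanging $ \Fun^{\add}(\cat{A}^{\op}, -) $ with the inverse limit, i.e.\ checking that the product-preservation condition is compatible with the $ \Omega $-tower; everything else reduces to the stability of $ \cat{E} $, the pointwise computation of limits and $ \Omega $ in $ \P^{\add}(\cat{A}) $, and the standard fact that $ \Sp(\Cat{Sp}_{\geq 0}) \simeq \Cat{Sp} $.
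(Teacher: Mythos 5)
Your proof is correct, but it implements the argument with a different model of stabilization than the paper does. The paper's proof uses the excisive-functor model: since $ \P^{\add}(\cat{A}) \subset \Fun(\cat{A}^{\op}, \Cat{Sp}_{\geq 0}) $ is closed under limits, it regards $ \Sp(\P^{\add}(\cat{A})) $ as the full subcategory of $ \Fun(\Cat{S}^{\fin}_{*} \times \cat{A}^{\op}, \Cat{Sp}_{\geq 0}) $ spanned by bifunctors that are reduced and excisive in the first variable and additive in the second, and then swaps the two variables by the currying equivalence, reducing to $ \Sp(\Cat{Sp}_{\geq 0}) \simeq \Cat{Sp} $. You instead use the $ \Omega $-tower description $ \Sp(\cat{D}) \simeq \lim(\cdots \xrightarrow{\Omega} \cat{D}) $ of \autocite[Proposition~1.4.2.24]{HA} and commute $ \Fun^{\add}(\cat{A}^{\op}, -) $ past the inverse limit. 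The two routes share the same skeleton --- limits and $ \Omega $ in $ \P^{\add}(\cat{A}) $ are pointwise, and $ \Cat{Sp} \simeq \Sp(\Cat{Sp}_{\geq 0}) $ via truncation --- but the technical work differs: in the paper both defining conditions live on a single bifunctor, so the exchange is an immediate Fubini-type manipulation of functor categories, whereas your argument needs the extra verification (which you correctly identify as the delicate step, and which is indeed standard) that finite products in the tower limit are computed componentwise, so that additivity of a functor into the limit is detected levelwise. What your route buys is the final identification: under the tower description the zeroth projection is literally $ \tau_{\geq 0} \circ (-) $, so the universal property of stabilization --- composition with $ \Omega^{\infty} $ identifies exact functors out of the stable $ \infty $-category $ \Fun^{\add}(\cat{A}^{\op}, \Cat{Sp}) $ with left exact functors --- pins your equivalence down as precisely the functor induced by truncation, a point the paper's proof leaves implicit.
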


\begin{proof}
    Since the full subcategory $ \P^{\add}(\cat{A}) \subset \Fun(\cat{A}^{\op}, \Cat{Sp}_{\geq0}) $ is
    closed under limits,
    under the equivalence $ \Fun(\Cat{S}^{\fin}_{*},
    \Fun(\cat{A}, \Cat{Sp}_{\geq0})) \simeq \Fun(\Cat{S}^{\fin}_{*}\times\cat{A}^{\op},
    \Cat{Sp}_{\geq0}) $
    we can regard the stable $ \infty $-category
    $ \Sp(\P^{\add}(\cat{A})) $ as the full subcategory of the $ \infty $-category
    $ \Fun(\Cat{S}^{\fin}_{*}\times\cat{A}^{\op},
    \Cat{Sp}_{\geq0}) $ consisting of functors $ f \colon
    \Cat{S}^{\fin}_{*}\times\cat{A}^{\op} \to
    \Cat{Sp}_{\geq0} $ which are reduced and excisive in the first variable and
    additive in the second variable.
    Moreover,
    since the truncation functor
    $ \tau_{\geq0} \colon \Cat{Sp} \to \Cat{Sp}_{\geq0} $
    induces a equivalence $ \Cat{Sp} \simeq \Sp(\Cat{Sp}_{\geq0}) $,
    this
    $ \infty $-category
    is equivalent to the $ \infty $-category
    $ \Fun^{\add}(\cat{A}^{\op}, \Cat{Sp}) $
    under the equivalence $ \Fun(\Cat{S}^{\fin}_{*}\times\cat{A}^{\op},
    \Cat{Sp}_{\geq0}) \simeq \Fun
    (\cat{A}^{\op}, \Fun(\Cat{S}^{\fin}_{*},
    \Cat{Sp}_{\geq0})) $.
\end{proof}

\subsection{Symmetric monoidal structure}

For the basic theory
of symmetric monoidal $ \infty $-categories,
we refer readers to \autocite{HA}.

\begin{definition}\label{0517e3b814}
    We call a symmetric monoidal $ \infty $-category~$ \cat{A}^{\otimes} $
    \emph{additive symmetric monoidal} if the underlying
    $ \infty $-category~$ \cat{A} $ is additive and tensor product operations are
    additive in each variable.

    We call a symmetric monoidal $ \infty $-category~$ \cat{C}^{\otimes} $
    \emph{stable symmetric monoidal} if the underlying $ \infty $-category~$ \cat{C} $
    is stable and tensor product operations are
    exact in each variable.
\end{definition}

We present additive and stable symmetric monoidal versions of
the ($ \infty $-categorical) Yoneda embedding,
which might be of independent interest.
We note that these ``additive and stable monoidal Yoneda embeddings''
are different from what Nikolaus called
by the same name in \autocite[Section~6]{Nikolaus16}.

\begin{proposition}\label{c9c36f3726}
    Let $ \cat{A}^{\otimes} $ and~$ \cat{C}^{\otimes} $ be
    additive and stable symmetric monoidal $ \infty $-categories
    respectively.
    Then the following hold:
    \begin{partenum}
        \item\label{201baaeda2}
            The additive $ \infty $-category $ \P^{\add}(\cat{A}) $ admits
            an additive symmetric monoidal structure
            whose tensor product operations preserve colimits in each variable.
            Moreover, there exists a symmetric monoidal functor
            $ \cat{A}^{\otimes} \to \P^{\add}(\cat{A})^{\otimes} $ whose
            underlying functor is the additive Yoneda embedding.
        \item\label{0663712d40}
            The stable $ \infty $-category $ \P^{\ex}(\cat{C}) $ admits
            a stable symmetric monoidal structure
            whose tensor product operations preserve colimits in each variable.
            Moreover, there exists a symmetric monoidal functor
            $ \cat{C}^{\otimes} \to \P^{\ex}(\cat{C})^{\otimes} $ whose
            underlying functor is the stable Yoneda embedding.
    \end{partenum}
\end{proposition}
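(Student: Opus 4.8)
The plan is to realize both $ \P^{\add}(\cat{A})^{\otimes} $ and $ \P^{\ex}(\cat{C})^{\otimes} $ as symmetric monoidal localizations of a Day convolution structure on the ordinary presheaf category, treating the two parts uniformly. Write $ \cat{X} $ for either $ \cat{A} $ or~$ \cat{C} $ and $ j \colon \cat{X} \to \P(\cat{X}) = \Fun(\cat{X}^{\op}, \Cat{S}) $ for the Yoneda embedding. Since $ \cat{X}^{\otimes} $ is symmetric monoidal and $ \Cat{S} $ is a presentable symmetric monoidal $ \infty $-category under its Cartesian structure, Day convolution \autocite[Section~4.8.1]{HA} endows $ \P(\cat{X}) $ with a symmetric monoidal structure whose tensor product preserves colimits in each variable and for which the Yoneda embedding refines to a symmetric monoidal functor $ j \colon \cat{X}^{\otimes} \to \P(\cat{X})^{\otimes} $; in particular $ j(x) \otimes j(y) \simeq j(x \otimes y) $ naturally.

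I would then recall that $ \P^{\add}(\cat{A}) $ (resp.\ $ \P^{\ex}(\cat{C}) $) is a strongly reflective, hence accessible, localization of $ \P(\cat{X}) $ with reflector~$ L $ (via \autocite[Lemmas~5.5.4.18--19]{HTT}, as already noted). The strongly saturated class $ S $ of $ L $-equivalences is generated by the comparison maps $ \operatorname{colim}(j \circ D) \to j(\operatorname{colim} D) $ attached to the relevant finite colimit diagrams $ D $ in~$ \cat{X} $: finite coproducts, including the empty one, in the additive case, and all finite colimits---equivalently the zero object together with pushout squares (cofiber sequences)---in the stable case, since a presheaf is $ L $-local exactly when it sends these colimits to limits of spaces. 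To transport the monoidal structure along~$ L $ using \autocite[Proposition~2.2.1.9]{HA}, I must check that $ S $ is compatible with~$ \otimes $, i.e.\ stable under tensoring by arbitrary objects. Because $ \otimes $ preserves colimits separately and $ \P(\cat{X}) $ is generated under colimits by the representables, it suffices to verify that $ S $ is stable under tensoring with objects $ j(e) $ for $ e \in \cat{X} $.

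This last verification is the heart of the argument, and the main obstacle; it is precisely where the additivity (resp.\ stability) of $ \cat{X}^{\otimes} $ is used. Given a generator $ \operatorname{colim}(j \circ D) \to j(\operatorname{colim} D) $, tensoring with $ j(e) $ and combining colimit-preservation of $ \otimes $ with the monoidal Yoneda equivalence $ j(e) \otimes j(x) \simeq j(e \otimes x) $ identifies the result with the comparison map $ \operatorname{colim}(j \circ (e \otimes D)) \to j(e \otimes \operatorname{colim} D) $ for the diagram $ e \otimes D $. Since $ e \otimes - $ is additive (in the additive case) and exact (in the stable case), it preserves the finite colimit in question and also its shape, so $ e \otimes \operatorname{colim} D \simeq \operatorname{colim}(e \otimes D) $ and the tensored map is again one of the chosen generators; hence it lies in~$ S $. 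Thus $ S $ is $ \otimes $-compatible.

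It then follows from \autocite[Proposition~2.2.1.9]{HA} that $ \P^{\add}(\cat{A}) $ (resp.\ $ \P^{\ex}(\cat{C}) $) carries a symmetric monoidal structure for which $ L $ is symmetric monoidal; as $ L $ is a left adjoint and the Day tensor preserves colimits, the localized tensor $ X \otimes Y \simeq L(X \otimes Y) $ again preserves colimits in each variable. The underlying $ \infty $-category is additive (resp.\ stable), and colimit-preservation then forces the tensor to be additive (resp.\ exact, since a finite-colimit-preserving functor between stable $ \infty $-categories is exact) in each variable, so the structure is additive (resp.\ stable) symmetric monoidal. Finally, every representable $ j(x) $ is already $ L $-local---it carries finite coproducts (resp.\ finite colimits) in~$ \cat{X} $ to products (resp.\ limits) of spaces---so $ L \circ j \simeq j $, and the composite $ \cat{X}^{\otimes} \xrightarrow{j} \P(\cat{X})^{\otimes} \xrightarrow{L} \P^{\add}(\cat{A})^{\otimes} $ (resp.\ $ \P^{\ex}(\cat{C})^{\otimes} $) is a symmetric monoidal functor whose underlying functor is the additive (resp.\ stable) Yoneda embedding, as required.
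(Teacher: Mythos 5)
Your proof is correct, but it takes a genuinely different route from the paper's. The paper disposes of both parts in one line: by \cref{f60e310bc2}, $\P^{\add}(\cat{A})$ and $\P^{\ex}(\cat{C})$ are the instances $\P^{\cls{K}}_{\cls{K}^{\add}}(\cat{A})$ and $\P^{\cls{K}}_{\cls{K}^{\ex}}(\cat{C})$ of Lurie's general presheaf construction, so the statement is a direct corollary of \autocite[Proposition~4.8.1.10]{HA} (with \autocite[Remark~4.8.1.9]{HA}), whose hypotheses are exactly additivity, resp.\ exactness, of the tensor operations. You instead rebuild this special case by hand: Day convolution on $\P(\cat{X})$, identification of $\P^{\add}(\cat{A})$, resp.\ $\P^{\ex}(\cat{C})$, as the localization of $\P(\cat{X})$ at the strongly saturated class $S$ generated by the finite-coproduct, resp.\ finite-colimit, comparison maps, a verification that $S$ is compatible with the convolution product, and then \autocite[Proposition~2.2.1.9]{HA}. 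This is essentially the concrete construction that the paper itself records in \cref{7e82e3b1bc} (following Nikolaus and Glasman) and observes to coincide with the official one by the universal property; your check that a generator tensored with a representable is again a generator is precisely where the hypothesis on $\cat{A}^{\otimes}$ and $\cat{C}^{\otimes}$ enters, a point your write-up makes visible while the paper's citation leaves it inside Lurie's proof. The trade-off: the paper's route is shorter and comes packaged with the functoriality of these symmetric monoidal constructions, which is invoked later (\cref{c3edc72e8f}, \cref{1c256e6a74}, \cref{b39219127c}) and which your construction would still need to supply; your route is self-contained and exhibits the mechanism. Two compressions in your argument deserve a word, though both are routine: promoting ``generators tensored with representables lie in $S$'' to ``$S$ is stable under tensoring with arbitrary objects'' uses twice over that a strongly saturated class is closed under colimits in the arrow category and that the Day tensor preserves colimits in each variable (once to pass from representables to all objects, once to pass from the generators to all of $S$); and in the stable case the generators should be indexed by the initial object together with all pushout squares, not only cofiber sequences, which is immaterial here since $e \otimes -$ is exact and your identification $j(e)\otimes s_D \simeq s_{e\otimes D}$ works for any finite colimit diagram $D$.
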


\begin{proof}
    By \cref{f60e310bc2} and
    \autocite[Remark~4.8.1.9]{HA},
    this is a corollary of \autocite[Proposition~4.8.1.10]{HA}.
\end{proof}

Combining this proof with \autocite[Proposition~4.8.1.5]{HA}
and \autocite[Corollary~5.5~(ii)]{GepnerGrothNikolaus15},
we get the following counterpart for the construction of \cref{3265f779d1}:

\begin{corollary}\label{c3edc72e8f}
    Let $ \cat{A}^{\otimes} $ be an
    additive symmetric monoidal $ \infty $-category.
    Then the stable $ \infty $-category $ \Sp(\P^{\add}(\cat{A})) $
    admits a stable symmetric monoidal structure
    whose tensor product operations preserve colimits in each variable and
    there exists a symmetric monoidal refinement
    of the composition
    $ \cat{A} \to \P^{\add}(\cat{A}) \to \Sp(\P^{\add}(\cat{A})) $.
\end{corollary}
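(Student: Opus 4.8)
The plan is to deduce this formally from \cref{c9c36f3726} by carrying out the stabilization inside the symmetric monoidal $\infty$-category of presentable $\infty$-categories. The point is that the construction in \cref{c9c36f3726} gives more than a symmetric monoidal structure on $\P^{\add}(\cat{A})$: because the tensor product preserves colimits in each variable and $\P^{\add}(\cat{A})$ is presentable and additive, it exhibits $\P^{\add}(\cat{A})^{\otimes}$ as a commutative algebra object of $\Cat{Pr}^{\L,\add}$, equipped moreover with a symmetric monoidal refinement $\cat{A}^{\otimes} \to \P^{\add}(\cat{A})^{\otimes}$ of the additive Yoneda embedding. I would take this commutative algebra as the starting point.

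The key step is to realize stabilization as a symmetric monoidal operation. Here I would use that $\Cat{Sp}$ is an idempotent object of $\Cat{Pr}^{\L}$ and that the stabilization $\Sp(\cat{C}) \simeq \cat{C} \otimes \Cat{Sp}$ is the associated smashing localization, so that it is symmetric monoidal and restricts to a symmetric monoidal functor $\Cat{Pr}^{\L,\add} \to \Cat{Pr}^{\L,\ex}$; this is exactly what \autocite[Proposition~4.8.1.5]{HA} and \autocite[Corollary~5.5~(ii)]{GepnerGrothNikolaus15} provide. Applying this symmetric monoidal functor to the commutative algebra $\P^{\add}(\cat{A})$ produces a commutative algebra $\Sp(\P^{\add}(\cat{A}))$ in $\Cat{Pr}^{\L,\ex}$, i.e. a presentable stable symmetric monoidal $\infty$-category whose tensor product preserves colimits in each variable. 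The localization unit $\P^{\add}(\cat{A}) \to \Sp(\P^{\add}(\cat{A}))$ is then a morphism of commutative algebras, hence symmetric monoidal, and under the identification of \cref{3265f779d1} it corresponds to postcomposition with the inclusion $\Cat{Sp}_{\geq0} \hookrightarrow \Cat{Sp}$. Precomposing it with the symmetric monoidal additive Yoneda embedding of \cref{c9c36f3726} then yields the desired symmetric monoidal refinement of the composition $\cat{A} \to \P^{\add}(\cat{A}) \to \Sp(\P^{\add}(\cat{A}))$.

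The main obstacle I anticipate is the symmetric monoidality asserted in the key step, and in particular the fact that the unit $\P^{\add}(\cat{A}) \to \Sp(\P^{\add}(\cat{A}))$ refines to a strong, and not merely lax, symmetric monoidal functor. This is precisely the role of the idempotence of $\Cat{Sp}$: it makes the stabilization localization smashing, so that the unit is symmetric monoidal and the structure induced on the stabilization is compatible with that on $\P^{\add}(\cat{A})$. Once the cited results supply this---together with the compatibility of the symmetric monoidal structures on $\Cat{Pr}^{\L,\add}$ and $\Cat{Pr}^{\L,\ex}$, which in the additive setting also ensures that the unit map is the expected connective inclusion rather than a reduced suspension---the remaining steps are formal manipulations of commutative algebra objects and their morphisms.
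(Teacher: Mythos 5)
Your proposal is correct and takes essentially the same route as the paper: the paper's own proof is precisely the combination of the construction in \cref{c9c36f3726} with \autocite[Proposition~4.8.1.5]{HA} and \autocite[Corollary~5.5~(ii)]{GepnerGrothNikolaus15}, i.e.\ the same smashing-localization facts you invoke to make stabilization a symmetric monoidal functor on presentable $\infty$-categories, to transport the commutative algebra structure of $\P^{\add}(\cat{A})^{\otimes}$ to $\Sp(\P^{\add}(\cat{A}))$, and to see that the localization unit is (strongly) symmetric monoidal. Your further identification of that unit with postcomposition along $\Cat{Sp}_{\geq0} \hookrightarrow \Cat{Sp}$ under the equivalence of \cref{3265f779d1} is a correct detail that the paper leaves implicit.
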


By construction, these three can be seen as functors
between appropriate $ \infty $-categories,
but we only need their $ 1 $-categorical functoriality in this paper.

\begin{remark}\label{7e82e3b1bc}
    Let $ \cat{A}^{\otimes} $ be
    an additive symmetric monoidal $ \infty $-category.
    By mimicking the proof of \autocite[Proposition~4.9]{Nikolaus16},
    we obtain a symmetric monoidal structure
    on $ \P^{\add}(\mathcal{A}) $ concretely
    as a localization of the symmetric monoidal structure on $ \P(\mathcal{A}) $
    of \autocite[Section~3]{Glasman16}.
    Since the symmetric monoidal structure on $ \P^{\add}(\mathcal{A}) $ of \cref{c9c36f3726}
    can be characterized by the property that
    the additive Yoneda embedding is symmetric monoidal
    and the tensor product operations preserve colimits in each variable,
    these two constructions coincide.

    Similarly,
    the symmetric monoidal structure on $ \Sp(\P^{\add}(\cat{A})) $ can be obtained by
    localizing that on $ \Fun(\mathcal{A}^{\op}, \mathsf{Sp}) $.
    In particular,
    for a stable symmetric monoidal $ \infty $-category~$ \cat{C} $,
    the functor $ \Sp(\P^{\add}(\cat{C})) \simeq \Fun^{\add}(\cat{C}^{\op},\Cat{Sp})
    \to \Fun^{\ex}(\cat{C}^{\op},\Cat{Sp}) = \P^{\ex}(\cat{C}) $
    has a symmetric monoidal refinement.
\end{remark}

\section{Weight structures}\label{cca8147ee1}

\subsection{Basic definitions and properties}

We present the basic theory of weight structures here.
We refer readers to \autocite{Bondarko10} for a detailed study
of weight structures.

\begin{definition}\label{8d18b2095c}
    Let $ \cat{D} $ be a triangulated category.
    A \emph{weight structure} on~$ \cat{D} $ is a pair of
    full subcategories $ (\cat{D}_{\w \geq 0}, \cat{D}_{\w \leq 0}) $
    satisfying the following conditions:
    \begin{enumerate}
        \item
            $ \cat{D}_{\w\geq0} $ and~$ \cat{D}_{\w\leq0} $ are
            closed under retracts in~$ \cat{D} $. In particular,
            they are closed under isomorphism.
        \item
            We have inclusions
            $ \cat{D}_{\w\geq0}[1] \subset \cat{D}_{\w\geq0} $ and
            $ \cat{D}_{\w\leq0}[-1] \subset \cat{D}_{\w\leq0} $.
        \item
            For $ X \in \cat{D}_{\w\leq0} $ and
            $ Y \in \cat{D}_{\w\geq0}[1] $, we have
            $ \Hom_{\cat{D}}(X,Y) = 0 $.
        \item
            For any $ Z \in \cat{D} $, there exists a distinguished triangle
            $ X \to Z \to Y $ where $ X \in \cat{D}_{\w\leq0} $ and
            $ Y \in \cat{D}_{\w\geq0}[1] $.
    \end{enumerate}
    If $ \cat{D} $ is equipped with a weight structure, we will write
    $ \cat{D}_{\w\geq n} $ and~$ \cat{D}_{\w\leq n} $ for
    $ \cat{D}_{\w\geq 0}[n] $ and~$ \cat{D}_{\w\leq 0}[n] $, respectively.
\end{definition}

\begin{definition}\label{33bfeb429c}
    Let $ \cat{C} $ be a stable $ \infty $-category.
    A \emph{weight structure} on~$ \cat{C} $ is a weight structure on the
    homotopy category~$ \h\cat{C} $.
    When $ \cat{C} $ is equipped with a weight structure, we will write
    $ \cat{C}_{\w\geq n} $ and~$ \cat{C}_{\w\leq n} $ for
    the full subcategories of~$ \cat{C} $ determined by
    $ \h\cat{C}_{\w\geq n} $ and~$ \h\cat{C}_{\w\leq n} $, respectively.
\end{definition}

\begin{definition}\label{6a013133ac}
    Let $ \cat{C} $ be a stable $ \infty $-category
    equipped with a weight structure.
    \begin{partenum}
        \item
            The \emph{heart}~$ \cat{C}_{\w}^{\heartsuit} $ of the weight structure
            is the full subcategory
            $ \cat{C}_{\w\geq0} \cap \cat{C}_{\w\leq0} \subset \cat{C} $.
        \item
            We denote the full subcategory of~$ \cat{C} $ consisting of
            objects $ X $ satisfying
            $ X \in \cat{C}_{\w\geq m} \cap \cat{C}_{\w\leq n} $ for
            some $ m $, $ n $
            by~$ \cat{C}^{\b} $.
            The weight structure on~$ \cat{C} $ is called \emph{bounded}
            if the equality $ \cat{C}^{\b} = \cat{C} $ holds.
    \end{partenum}
\end{definition}

\begin{remark}\label{c83efbfb70}
    Unlike the case of $ \t $-structures,
    the heart of a weight structure is generally not equivalent to
    an ordinary category, i.e., the mapping spaces may not be
    (homotopy) discrete.
\end{remark}

\begin{example}\label{6a1432a5bc}
    Let $ \cat{B} $ be an additive category.
    Then the stable $ \infty $-category
    $ \Kb(\cat{B}) = \Ndg(\operatorname{Ch}^{\b}(\cat{B})) $
    has a canonical weight structure,
    where $ \Ndg $ denotes the dg nerve construction
    given in \autocite[Construction~1.3.1.6]{HA}.
    In this case, the weight structure is bounded by definition.
    Its heart is the essential image under
    the canonical embedding $ \cat{B} \to \Kb(\cat{B}) $.
\end{example}

We present some basic facts concerning weight structures here.

\begin{lemma}\label{cec9699fb4}
    Let $ \cat{C} $ be a stable $ \infty $-category
    equipped with a weight structure.
    Then the following hold:
    \begin{partenum}
        \item\label{f63d91cbb8}
            The heart~$ \cat{C}_{\w}^{\heartsuit} $ is
            an additive subcategory of~$ \cat{C} $.
        \item\label{04c07dfbe3}
            The full subcategory~$ \cat{C}^{\b} $
            is closed under finite limits and colimits.
            In particular, $ \cat{C}^{\b} $ is a stable $ \infty $-category.
            Moreover, the pair
            $ (\cat{C}^{\b} \cap \cat{C}_{\w \geq 0}, \cat{C}^{\b} \cap \cat{C}_{\w \leq 0}) $
            defines a weight structure on~$ \cat{C}^{\b} $,
            whose heart coincides with that of~$ \cat{C} $.
        \item\label{8ab91f1f25}
            If the weight structure is bounded,
            the $ \infty $-category~$ \cat{C} $
            is generated by the heart~$ \cat{C}_{\w}^{\heartsuit} $
            under finite limits and colimits.
    \end{partenum}
\end{lemma}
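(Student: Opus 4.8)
The plan is to reduce all three parts to properties of the weight structure on the triangulated category $ \h\cat{C} $ established in \autocite{Bondarko10}, transported across the fact that a cofiber sequence in~$ \cat{C} $ maps to a distinguished triangle in~$ \h\cat{C} $ and that, conversely, every distinguished triangle in~$ \h\cat{C} $ is the image of a cofiber sequence in~$ \cat{C} $. Concretely, the inputs I would use are that each truncation class $ \cat{C}_{\w\geq0} $ and~$ \cat{C}_{\w\leq0} $ is closed under retracts and under extensions, hence under finite direct sums via the split triangle $ A \to A\oplus B \to B $. Since $ \cat{C}_{\w\geq n} $ and~$ \cat{C}_{\w\leq n} $ are by definition detected in~$ \h\cat{C} $, these closure properties transfer to cofiber sequences in~$ \cat{C} $ directly.

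For \cref{f63d91cbb8}, by \cref{a466051a25} it suffices to show that $ \cat{C}_{\w}^{\heartsuit} = \cat{C}_{\w\geq0}\cap\cat{C}_{\w\leq0} $ is closed under finite coproducts in~$ \cat{C} $, and this is immediate: the zero object lies in both classes, and closure under binary biproducts follows from closure of each class under finite direct sums. For \cref{04c07dfbe3}, I would first verify the standard criterion that $ \cat{C}^{\b} $ is a stable subcategory, namely that it contains~$ 0 $, is closed under the shifts $ [\pm1] $ (which merely translate the weight bounds), and is closed under cofibers. For cofibers, given a map between objects $ X,Y \in \cat{C}^{\b} $ I would bound $ \operatorname{cofib} $ by feeding the triangle $ Y \to \operatorname{cofib} \to X[1] $ into extension-closure of a single class $ \cat{C}_{\w\leq N} $ and of $ \cat{C}_{\w\geq M} $, with $ N $ and~$ M $ chosen to dominate the bounds of both $ X $ and~$ Y $. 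To check the weight-structure axioms on~$ \cat{C}^{\b} $, axioms (1)--(3) restrict verbatim from~$ \cat{C} $; for axiom~(4) I would take a weight decomposition $ X \to Z \to Y $ of a bounded object~$ Z $ and confirm that $ X,Y \in \cat{C}^{\b} $, the upper bound of~$ X $ and lower bound of~$ Y $ being built in while the complementary bounds follow by applying extension-closure to the rotated triangles $ Y[-1] \to X \to Z $ and $ Z \to Y \to X[1] $. The equality of hearts is then formal, as $ \cat{C}_{\w}^{\heartsuit}\subset\cat{C}^{\b} $.

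For \cref{8ab91f1f25}, let $ \cat{E} $ be the smallest full subcategory of~$ \cat{C} $ containing $ \cat{C}_{\w}^{\heartsuit} $ and closed under finite limits and colimits. Boundedness provides, for each $ Z\in\cat{C} $, integers $ m\leq n $ with $ Z\in\cat{C}_{\w\geq m}\cap\cat{C}_{\w\leq n} $, and I would induct on the weight length $ n-m $. When $ n=m $ the object is a shift of a heart object and hence lies in~$ \cat{E} $. For $ n>m $ I would take a weight decomposition at level~$ m $, realized as a cofiber sequence $ w_{\leq m}Z \to Z \to w_{\geq m+1}Z $ in~$ \cat{C} $; a short extension-closure computation (as in the previous paragraph) shows $ w_{\leq m}Z\in\cat{C}_{\w}^{\heartsuit}[m] $ and $ w_{\geq m+1}Z\in\cat{C}_{\w\geq m+1}\cap\cat{C}_{\w\leq n} $, whose weight length is $ n-(m+1)<n-m $. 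The inductive hypothesis places $ w_{\geq m+1}Z $ in~$ \cat{E} $, the term $ w_{\leq m}Z $ is a shifted heart object, and $ Z $ is their extension, so $ Z\in\cat{E} $; this yields $ \cat{E}=\cat{C} $.

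I expect the main obstacle to be not any deep structural input but the systematic bookkeeping of weight bounds through (co)fiber sequences in \cref{04c07dfbe3} and~\cref{8ab91f1f25}. One must repeatedly pass between the abstract distinguished triangles supplied by the weight-decomposition axiom and genuine cofiber sequences in~$ \cat{C} $, and check in each case that the truncation terms remain within the expected weight range; once the bounded analogues of extension-closure are in hand, the remaining verifications are formal consequences of stability together with direct citations to \autocite{Bondarko10}.
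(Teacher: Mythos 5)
Your proof is correct and takes essentially the same route as the paper: the paper dismisses part (1) as immediate and cites Bondarko's Proposition~1.3.6 and Corollary~1.5.7 for parts (2) and (3), and your argument is exactly the $\infty$-categorical unpacking of those citations --- extension-closure of $\cat{C}_{\w\geq0}$ and $\cat{C}_{\w\leq0}$ plus induction on weight length, transported along the correspondence between cofiber sequences in $\cat{C}$ and distinguished triangles in $\h\cat{C}$. The only difference is that you supply the bookkeeping that the paper delegates to \autocite{Bondarko10}.
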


\begin{proof}
    \Cref{f63d91cbb8} is immediate from the definition.
    \Cref{04c07dfbe3,8ab91f1f25}
    are $ \infty $-categorical reformulations of
    \autocite[Proposition~1.3.6]{Bondarko10} and
    \autocite[Corollary~1.5.7]{Bondarko10}, respectively.
\end{proof}

\subsection{Weight complex}

Before stating Sosnilo's result,
which we use to
define the weight complex functor,
we give a definition of a morphism between stable $ \infty $-categories
equipped with weight structures.

\begin{definition}\label{02a8228d1d}
    Let $ \cat{C} $ and~$ \cat{C}' $ be stable $ \infty $-categories
    equipped with weight structures.
    A functor $ f \colon \cat{C} \to \cat{C}' $ is called
    \emph{weight exact} if it is exact and carries
    $ \cat{C}_{\w\geq 0} $ and~$ \cat{C}_{\w\leq 0} $ into
    $ \cat{C}'_{\w\geq 0} $ and~$ \cat{C}'_{\w\leq 0} $, respectively.
\end{definition}

The following result is due to Sosnilo:

\begin{proposition}[Sosnilo]\label{53341890c5}
    Let $ \cat{C} $ be a stable $ \infty $-category equipped with
    a bounded weight structure.
    Then the following hold:
    \begin{partenum}
        \item\label{96ce752920}
            The composition
            \begin{equation*}
                \cat{C}
                \to \P^{\ex}(\cat{C})
                = \Fun^{\ex}(\cat{C}^{\op}, \Cat{Sp})
                \to \Fun^{\add}((\cat{C}_{\w}^{\heartsuit})^{\op}, \Cat{Sp})
            \end{equation*}
            is exact and fully faithful.
            Here the second arrow is given by restriction.
        \item\label{559c32c4d0}
            Let $ \cat{C}' $ be a stable $ \infty $-category
            equipped with a weight structure. Then
            the restriction functor
            $
                \Fun^{\wex}(\cat{C}, \cat{C}')
                \to \Fun^{\add}(\cat{C}_{\w}^{\heartsuit},
                \cat{C}'^{\heartsuit}_{\w})
            $
            is an equivalence of $ \infty $-categories.
    \end{partenum}
\end{proposition}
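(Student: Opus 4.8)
The plan is to prove \cref{96ce752920} first and then bootstrap \cref{559c32c4d0} from it. Throughout write $\heartsuit := \cat{C}_{\w}^{\heartsuit}$ and $\heartsuit' := \cat{C}'^{\heartsuit}_{\w}$. In \cref{96ce752920} the first arrow is the stable Yoneda embedding, which is exact and fully faithful by \cref{eab25264e8}, while the second arrow, restriction along $\heartsuit \hookrightarrow \cat{C}$, is exact because (co)limits in functor categories are computed pointwise, and it lands in $\Fun^{\add}$ since $\heartsuit$ is additive (\cref{f63d91cbb8}) and the restriction of an exact functor to an additive subcategory is additive. Hence the composite $W \colon X \mapsto (H \mapsto \map_{\cat{C}}(H, X))$ is exact, and only full faithfulness requires argument. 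The key input is orthogonality (axiom~(3)): for $X \in \cat{C}_{\w \leq 0}$ and $Y \in \cat{C}_{\w \geq 0}$ one has $\pi_{-n}\map_{\cat{C}}(X,Y) \cong \Hom_{\h\cat{C}}(X, Y[n]) = 0$ for $n > 0$, because $Y[n] \in \cat{C}_{\w \geq n} \subset \cat{C}_{\w \geq 1}$, so $\map_{\cat{C}}(X,Y)$ is connective. In particular, for $H \in \heartsuit$ the presheaf $W(H)$ takes connective values, hence lies in $\P^{\add}(\heartsuit) \subset \Fun^{\add}(\heartsuit^{\op}, \Cat{Sp})$ and coincides with the additive Yoneda image of $H$.

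To finish \cref{96ce752920} I would first check full faithfulness on the heart: for $H, H' \in \heartsuit$ the full inclusion $\Cat{Sp}_{\geq 0} \subset \Cat{Sp}$ gives $\map_{\Fun^{\add}(\heartsuit^{\op}, \Cat{Sp})}(WH, WH') \simeq \map_{\P^{\add}(\heartsuit)}(WH, WH')$, and the additive Yoneda lemma (\cref{7aa54feb2f}) identifies this with $\map_{\heartsuit}(H, H') = \map_{\cat{C}}(H, H')$. To propagate this, note that both $\map_{\cat{C}}(X, Y)$ and $\map(WX, WY)$ are exact in each variable (using exactness of $W$), so the locus of pairs $(X, Y)$ on which the comparison map $\map_{\cat{C}}(X,Y) \to \map(WX, WY)$ is an equivalence is closed under finite (co)limits and retracts in each variable; as it contains $\heartsuit$ and $\cat{C}$ is generated by $\heartsuit$ under finite limits and colimits (\cref{8ab91f1f25}), it is all of $\cat{C} \times \cat{C}$.

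For \cref{559c32c4d0} I would first reduce to the bounded case for the target: since $\cat{C}$ is bounded, a weight-exact functor out of $\cat{C}$ factors through $\cat{C}'^{\b}$, so $\Fun^{\wex}(\cat{C}, \cat{C}') \simeq \Fun^{\wex}(\cat{C}, \cat{C}'^{\b})$, and $\cat{C}'^{\b}$ carries a bounded weight structure with the same heart $\heartsuit'$ (\cref{04c07dfbe3}); thus I may assume $\cat{C}'$ bounded and apply \cref{96ce752920} to it as well. Abbreviate $\widehat{\cat{C}} := \Fun^{\add}(\heartsuit^{\op}, \Cat{Sp}) \simeq \Sp(\P^{\add}(\heartsuit))$ (\cref{3265f779d1}) and likewise $\widehat{\cat{C}'}$ for $\heartsuit'$. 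The representables are compact projective generators of $\P^{\add}(\heartsuit)$ \autocite{GepnerGrothNikolaus15}, so the heart objects are compact generators of $\widehat{\cat{C}}$; since finite (co)limits of compacts are compact, \cref{8ab91f1f25} shows that the image of $\cat{C}$ under \cref{96ce752920} consists of compact objects generating $\widehat{\cat{C}}$. Consequently $\widehat{\cat{C}}^{\omega}$ is the idempotent completion of $\cat{C}$ and $\widehat{\cat{C}} \simeq \operatorname{Ind}(\widehat{\cat{C}}^{\omega})$. Combining the universal property of $\operatorname{Ind}$ with that of $\Sp(\P^{\add}(\heartsuit))$ (\cref{c3edc72e8f}, \autocite{GepnerGrothNikolaus15}) for the stable presentable target $\widehat{\cat{C}'}$ yields
\[
    \Fun^{\ex}(\cat{C}, \widehat{\cat{C}'}) \simeq \Fun^{\L}(\widehat{\cat{C}}, \widehat{\cat{C}'}) \simeq \Fun^{\add}(\heartsuit, \widehat{\cat{C}'}),
\]
all implemented by restriction to $\heartsuit$.

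It then remains to descend the target from $\widehat{\cat{C}'}$ to $\cat{C}'$ and $\heartsuit'$. Post-composition with the fully faithful inclusions $\cat{C}' \hookrightarrow \widehat{\cat{C}'}$ and $\heartsuit' \hookrightarrow \widehat{\cat{C}'}$ exhibits $\Fun^{\wex}(\cat{C}, \cat{C}')$ and $\Fun^{\add}(\heartsuit, \heartsuit')$ as full subcategories of the two outer terms above, and the restriction functor $R$ of \cref{559c32c4d0} is the induced map; so it suffices that these full subcategories correspond under the displayed equivalence. If $F \in \Fun^{\ex}(\cat{C}, \widehat{\cat{C}'})$ has $F|_{\heartsuit}$ landing in $\heartsuit'$, then since $\cat{C}' \subset \widehat{\cat{C}'}$ is a stable subcategory and $\cat{C}$ is generated by $\heartsuit$ under finite (co)limits, $F$ factors through $\cat{C}'$; moreover $F$ is weight-exact because $\cat{C}_{\w \geq 0}$ (resp.\ $\cat{C}_{\w \leq 0}$) is generated from the $\heartsuit[n]$ with $n \geq 0$ (resp.\ $n \leq 0$) under extensions and retracts, all of which the exact $F$ preserves and which stay within $\cat{C}'_{\w \geq 0}$ (resp.\ $\cat{C}'_{\w \leq 0}$) by the weight structure axioms \autocite{Bondarko10}; conversely weight-exact functors carry $\heartsuit$ into $\heartsuit'$. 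I expect the main obstacle to be the package in the previous paragraph—verifying that the heart objects are compact and generate $\widehat{\cat{C}}$, so that $\widehat{\cat{C}} \simeq \operatorname{Ind}(\widehat{\cat{C}}^{\omega})$ with $\widehat{\cat{C}}^{\omega}$ the idempotent completion of $\cat{C}$—together with the weight-exactness of the produced extension; once these are in place, the full-faithfulness bookkeeping and the descent of targets are formal.
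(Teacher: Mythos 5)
The paper offers no argument for this proposition: its entire proof is the citation \autocite[Proposition~3.3]{Sosnilo17}. So your proof is necessarily a different route from the paper's, and the only question is whether it is correct. Your architecture is essentially sound, and is a reasonable reconstruction of what such a proof must do. For \cref{96ce752920}: orthogonality forces heart-to-heart mapping spectra to be connective, the restricted stable Yoneda image of the heart is identified with the additive Yoneda image, and full faithfulness propagates from the heart by the generation statement of \cref{8ab91f1f25}. For \cref{559c32c4d0}: the reduction to a bounded target via \cref{04c07dfbe3}, the compact generation of $\Fun^{\add}((\cat{C}_{\w}^{\heartsuit})^{\op}, \Cat{Sp})$ by the heart representables, the universal-property chain through $\operatorname{Ind}$ and idempotent completion, and the descent of targets (using that $\cat{C}'_{\w\geq0}$ and $\cat{C}'_{\w\leq0}$ are closed under extensions and retracts, and that for a bounded weight structure $\cat{C}_{\w\geq0}$ and $\cat{C}_{\w\leq0}$ are generated by the appropriate shifts of the heart under extensions) are all viable steps; the structural facts you lean on are indeed in Bondarko, GGN, and HTT, though your citation of \cref{c3edc72e8f} for the universal property of $\Sp(\P^{\add}(\text{--}))$ is off (that corollary concerns the monoidal structure; the universal property is GGN plus HTT~5.3.6.2).

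There is, however, one genuine gap, and it sits at the crux of \cref{96ce752920}: a mismatch between mapping spaces and mapping spectra. Your heart-level computation is a mapping-\emph{space} statement — both ingredients you invoke (fullness of $\Fun^{\add}(\cdot,\Cat{Sp}_{\geq0})$ inside $\Fun^{\add}(\cdot,\Cat{Sp})$, and the additive Yoneda lemma in $\P^{\add}(\cat{C}_{\w}^{\heartsuit})$) are about spaces. Your propagation step, by contrast, needs the comparison of mapping \emph{spectra}: ``exact in each variable'' only makes sense at the spectrum level, and it is only there that the locus is closed under finite limits as well as finite colimits. (With spaces, $\map(-,Y)$ carries finite colimits to limits, but carries finite limits — e.g.\ desuspensions — to nothing useful, so the space-level locus need not contain $H[-1]$ and the generation argument does not close.) A mapping-space equivalence on heart pairs controls $\pi_n$ of the mapping spectra only for $n \geq 0$; the actual content of full faithfulness is that the negative homotopy groups also match, namely that $\pi_{-n}\Map_{\widehat{\cat{C}}}(WH, WH')$ vanishes just as $\pi_{-n}\Map_{\cat{C}}(H,H') \cong \Hom_{\h\cat{C}}(H, H'[n])$ does by axiom~(3). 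The missing bridge is a spectrum-level Yoneda lemma: for every $F \in \Fun^{\add}((\cat{C}_{\w}^{\heartsuit})^{\op},\Cat{Sp})$ one has $\Map(WH, F) \simeq F(H)$. This follows because, under \cref{3265f779d1}, the inclusion $\P^{\add}(\cat{C}_{\w}^{\heartsuit}) \to \Fun^{\add}((\cat{C}_{\w}^{\heartsuit})^{\op},\Cat{Sp})$ (postcomposition with $\Cat{Sp}_{\geq0} \subset \Cat{Sp}$) is left adjoint to postcomposition with $\tau_{\geq0}$, so $\map(WH, F[n]) \simeq \map_{\P^{\add}}(yH, \tau_{\geq0}(F[n])) \simeq \Omega^{\infty}(F(H)[n])$ by the additive Yoneda lemma, and letting $n$ vary identifies all homotopy groups. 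With this supplement (applied to $F = WH'$) your locus argument runs at the spectrum level, and the rest of the proof, including \cref{559c32c4d0}, goes through.
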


\begin{proof}
    See \autocite[Proposition~3.3]{Sosnilo17}.
\end{proof}

Let $ \cat{C} $ be a stable $ \infty $-category equipped with
a bounded weight structure.
Combining
\cref{96ce752920} of \cref{53341890c5}
and \cref{8ab91f1f25} of \cref{cec9699fb4},
we can identify $ \cat{C} $ with the full subcategory
$ \Sp(\P^{\add}(\cat{C}_{\w}^{\heartsuit})) $ generated by
the essential image of the embedding
$ \cat{C}_{\w}^{\heartsuit} \to \Sp(\P^{\add}(\cat{C}_{\w}^{\heartsuit})) $
under finite limits and colimits.
We denote this subcategory by $ \Sp(\P^{\add}(\cat{C}_{\w}^{\heartsuit}))^{\fin} $;
beware that this $ \infty $-category is not determined by
$ \Sp(\P^{\add}(\cat{C}_{\w}^{\heartsuit})) $ itself
but by the additive $ \infty $-category~$ \cat{C}_{\w}^{\heartsuit} $.
Under the hypothesis of~\cref{559c32c4d0} of \cref{53341890c5},
we can describe
the weight exact functor $ g \colon \cat{C} \to \cat{C}' $ which corresponds
to an additive functor $ f \colon
\cat{C}_{\w}^{\heartsuit} \to
\cat{C}'^{\heartsuit}_{\w} $ by the equivalence
under this identification:
The functor~$ g $ can be regarded as
the restriction of the functor
$ \Sp(\P^{\add}(\cat{C}_{\w}^{\heartsuit})) \to \Sp(\P^{\add}(\cat{C'}^{\heartsuit}_{\w})) $
determined by $ f $
to the full subcategory
$ \Sp(\P^{\add}(\cat{C}_{\w}^{\heartsuit}))^{\fin} $.

Now we define the weight complex functor as follows:

\begin{definition}\label{e28ce92cea}
    Let $ \cat{C} $ be a stable $ \infty $-category equipped with a
    bounded weight structure. The \emph{weight complex} functor is
    the weight exact functor $ \cat{C} \to \Kb(\h \cat{C}_{\w}^{\heartsuit}) $
    which is mapped to (an additive functor equivalent to)
    the additive functor $ \cat{C}_{\w}^{\heartsuit}
    \to \h \cat{C}_{\w}^{\heartsuit} $
    under the equivalence $ \Fun^{\wex}(\cat{C}, \Kb(\h\cat{C}_{\w}^{\heartsuit}))
    \to \Fun^{\add}(\cat{C}_{\w}^{\heartsuit},
    \h \cat{C}^{\heartsuit}_{\w}) $ of \cref{559c32c4d0} of \cref{53341890c5}.
\end{definition}

\begin{remark}\label{ad3f437300}
    As shown in \autocite[Corollary~3.5]{Sosnilo17},
    the functor of \cref{e28ce92cea} is an $ \infty $-categorical
    enhancement of the functor
    constructed in \autocite{Bondarko10} under the name
    ``strong weight complex functor''.
\end{remark}

\section{Main theorem}\label{17df43aa2c}

To state our main result, we need a notion of compatibility
between a stable symmetric monoidal structure and a weight structure.

\begin{definition}\label{948985e80f}
    Let $ \cat{C}^{\otimes} $ be
    a stable symmetric monoidal $ \infty $-category.
    We call a weight structure $ (\cat{C}_{\w\geq0}, \cat{C}_{\w\leq0}) $ on
    the underlying $ \infty $-category~$ \cat{C} $ \emph{compatible} with
    the symmetric monoidal structure if $ \cat{C}_{\w\geq0} $ and~$ \cat{C}_{\w\leq0} $
    are closed under tensor product operations.
\end{definition}

By \autocite[Proposition~2.2.1.1]{HA},
if the underlying $ \infty $-category of a stable symmetric monoidal
$ \infty $-category~$ \cat{C}^{\otimes} $ has a compatible weight structure,
the symmetric monoidal structure on~$ \cat{C}^{\otimes} $
can be restricted to
its full subcategories $ \cat{C}_{\w\geq0} $,
$ \cat{C}_{\w\leq0} $, and~$ \cat{C}_{\w}^{\heartsuit} $.
We let
$ (\cat{C}_{\w}^{\heartsuit})^{\otimes} $
denote the restriction to the heart.

\begin{lemma}\label{1c256e6a74}
    Let $ \cat{C}^{\otimes} $ be a stable symmetric monoidal
    $ \infty $-category whose underlying $ \infty $-category is equipped with
    a bounded compatible weight structure.
    Then the fully faithful functor
    $
        \cat{C}
        \to \Sp(\P^{\add}(\cat{C}_{\w}^{\heartsuit}))
    $
    given in~\cref{96ce752920} of \cref{53341890c5} admits
    a canonical symmetric monoidal refinement.
\end{lemma}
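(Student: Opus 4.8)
Write $ \cat{D} := \Sp(\P^{\add}(\cat{C}_{\w}^{\heartsuit})) $ and let $ \Phi \colon \cat{C} \to \cat{D} $ be the fully faithful functor of \cref{96ce752920} of \cref{53341890c5} whose refinement we seek. The plan is to produce the refinement \emph{indirectly}, by comparison with the stable Yoneda embedding, rather than by analyzing $ \Phi $ itself: $ \Phi $ is assembled from a restriction functor, which for the Day-type monoidal structures at hand is only lax monoidal, so a direct approach stalls. Write $ i \colon (\cat{C}_{\w}^{\heartsuit})^{\otimes} \to \cat{C}^{\otimes} $ for the symmetric monoidal heart inclusion and $ y \colon \cat{C}^{\otimes} \to \P^{\ex}(\cat{C})^{\otimes} $ for the symmetric monoidal stable Yoneda embedding of \cref{0663712d40} of \cref{c9c36f3726}, regarding $ \cat{C}^{\otimes} $ as additive symmetric monoidal throughout. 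Because $ y $ is symmetric monoidal and fully faithful, its image $ y(\cat{C}) $ is closed under $ \otimes $ and, by \autocite[Proposition~2.2.1.1]{HA}, inherits a symmetric monoidal structure for which $ y $ corestricts to a symmetric monoidal equivalence onto $ y(\cat{C})^{\otimes} $. The image of $ \Phi $ is the subcategory $ \cat{D}^{\fin} $; I would likewise show that $ \cat{D}^{\fin} $ is closed under $ \otimes $ in $ \cat{D} $, using that the heart embeds into $ \cat{D} $ symmetric monoidally (\cref{c3edc72e8f}), hence lies in $ \cat{D}^{\fin} $ and is closed under $ \otimes $, and that the tensor product of $ \cat{D} $ is exact in each variable, so the standard one-variable-at-a-time argument propagates membership from the heart to all of $ \cat{D}^{\fin} $.

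Next I would manufacture an auxiliary symmetric monoidal functor running the other way. The $ 1 $-categorical functoriality of the construction $ \Sp(\P^{\add}(-)) $ applied to the symmetric monoidal $ i $ gives a symmetric monoidal functor $ \cat{D}^{\otimes} \to \Sp(\P^{\add}(\cat{C}))^{\otimes} $, and post-composing with the symmetric monoidal localization $ L \colon \Sp(\P^{\add}(\cat{C}))^{\otimes} \to \P^{\ex}(\cat{C})^{\otimes} $ of \cref{7e82e3b1bc} yields a symmetric monoidal $ \Psi \colon \cat{D}^{\otimes} \to \P^{\ex}(\cat{C})^{\otimes} $ whose underlying functor is exact and colimit-preserving.

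The crux is then the identification $ \Psi \circ \Phi \simeq y $. On the heart it is immediate: $ \Phi $ takes a heart object to its representable presheaf, $ \Sp(\P^{\add}(i)) $ (a left Kan extension) sends that to the representable on its image in $ \cat{C} $, and $ L $ fixes representables. For general objects, the counit of the adjunction $ \Sp(\P^{\add}(i)) \dashv (\text{restriction}) $, followed by $ L $, furnishes a natural transformation $ \Psi \circ \Phi \Rightarrow y $; since both functors are exact, the full subcategory of objects on which it is an equivalence is closed under finite limits, colimits, and retracts, and it contains the heart, which generates $ \cat{C} $ by \cref{8ab91f1f25} of \cref{cec9699fb4}. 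Hence the transformation is an equivalence everywhere.

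Finally I would assemble the pieces. As $ \Psi $ is symmetric monoidal and carries the symmetric monoidal subcategory $ \cat{D}^{\fin} $ into the symmetric monoidal subcategory $ y(\cat{C}) $, it corestricts to a symmetric monoidal functor $ (\cat{D}^{\fin})^{\otimes} \to y(\cat{C})^{\otimes} $; the identity $ \Psi \circ \Phi \simeq y $ together with the full faithfulness of $ \Phi $ and $ y $ forces this corestriction to be an equivalence of underlying $ \infty $-categories, hence a symmetric monoidal equivalence with symmetric monoidal inverse. Composing $ y $ with this inverse and with the symmetric monoidal inclusion $ (\cat{D}^{\fin})^{\otimes} \hookrightarrow \cat{D}^{\otimes} $ then produces a symmetric monoidal functor whose underlying functor is $ \Phi $, as desired. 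I expect the two genuinely non-formal points to be the closure of $ \cat{D}^{\fin} $ under $ \otimes $ (resting on biexactness of the tensor product and the monoidal embedding of the heart) and the verification that the counit-induced transformation is an equivalence on all of $ \cat{C} $, the step where boundedness of the weight structure, via the generation statement, is indispensable; the merely lax monoidality of the naive restriction functor is exactly what makes this roundabout route necessary.
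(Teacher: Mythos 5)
Your proposal is correct and takes essentially the same route as the paper's proof: the same auxiliary symmetric monoidal functor (your $ \Psi^{\otimes} $ is the paper's $ L^{\otimes} \circ I^{\otimes} $, built from the functoriality of $ \Sp(\P^{\add}(\text{--}))^{\otimes} $ and the monoidal localization), the same two monoidal full subcategories (the image of the stable Yoneda embedding and $ \Sp(\P^{\add}(\cat{C}_{\w}^{\heartsuit}))^{\fin} $), and the same final assembly of the refinement as the corestricted Yoneda embedding followed by the inverse of the corestricted $ \Psi $ and the inclusion into $ \Sp(\P^{\add}(\cat{C}_{\w}^{\heartsuit}))^{\otimes} $. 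The only differences are cosmetic: you verify $ \Psi \circ \Phi \simeq y $ via the counit plus the heart-generation argument, where the paper instead deduces that the restriction $ f $ of $ L \circ I $ is an equivalence from Sosnilo's full faithfulness of the right adjoint (two mirror-image formulations of the same check), and you make explicit the closure of $ \Sp(\P^{\add}(\cat{C}_{\w}^{\heartsuit}))^{\fin} $ under tensor products, which the paper asserts without detail.
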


\begin{proof}
    We consider the following diagram
    of symmetric monoidal $ \infty $-categories commutative up to homotopy:
    \begin{equation*}
        \xymatrix{
            (\cat{C}_{\w}^{\heartsuit})^{\otimes}
            \ar[r]^{i^{\otimes}}
            \ar[d]_{j^{\otimes}}
            & \cat{C}^{\otimes}
            \ar[d]_{j'^{\otimes}}
            \ar[dr]^{j''^{\otimes}}
            \\
            \Sp(\P^{\add}(\cat{C}_{\w}^{\heartsuit}))^{\otimes}
            \ar[r]^{I^{\otimes}}
            & \Sp(\P^{\add}(\cat{C}))^{\otimes}
            \ar[r]^(.55){L^{\otimes}}
            & \P^{\ex}(\cat{C})^{\otimes}
            \rlap{.}
        }
    \end{equation*}
    The square is constructed
    from the inclusion
    $ i^{\otimes} \colon (\cat{C}_{\w}^{\heartsuit})^{\otimes} \to \cat{C}^{\otimes} $
    using the functoriality of the construction $ \Sp(\P^{\add}(\text{--}))^{\otimes} $.
    The functor~$ j''^{\otimes} $ is the stable symmetric monoidal
    Yoneda embedding for~$ \cat{C}^{\otimes} $ and
    the functor~$ L^{\otimes} $ is a symmetric monoidal refinement of the reflector~$ L $
    (see \cref{7e82e3b1bc}).
    Note that the functors~$ I $ and~$ L $ have right adjoints
    whose composition is the restriction functor
    $ \P^{\ex}(\cat{C}) = \Fun^{\ex}(\cat{C}^{\op}, \Cat{Sp})
    \to \Fun^{\add}((\cat{C}_{\w}^{\heartsuit})^{\op}, \Cat{Sp})
    \simeq \Sp(\P^{\add}(\cat{C}_{\w}^{\heartsuit})) $.
    In particular, both functors are exact.

    Let $ \cat{C}' $ be the full subcategory
    of $ \P^{\ex}(\cat{C}) $
    generated by the
    essential image of
    the functor $ j'' \circ i \simeq L \circ I \circ j $ under finite limits and colimits.
    Since the functors~$ j'' $, $ L $ and~$ I $ are exact,
    $ \cat{C}' $ is equal to the essential image of~$ j'' $
    and also that of the restriction of~$ L \circ I  $ to the full subcategory
    $ \Sp(\P^{\add}(\cat{C}_{\w}^{\heartsuit}))^{\fin} $.
    From the former equality
    and the symmetric monoidality of $ j''^{\otimes} $,
    we can see that $ \cat{C}' $ is closed under
    the tensor product operations in $ \Sp(\P^{\add}(\cat{C})) $.
    Hence we can give a symmetric monoidal structure on~$ \cat{C}' $
    by restriction.

    Since $ L^{\otimes} $ and~$ I^{\otimes} $ are symmetric monoidal,
    the restriction of $ L^{\otimes} \circ I^{\otimes} $ defines
    a symmetric monoidal functor
    $ f^{\otimes} \colon
    (\Sp(\P^{\add}(\cat{C}_{\w}^{\heartsuit}))^{\fin})^{\otimes} \to \cat{C}'^{\otimes} $,
    where the left hand side
    denotes the restriction of $ \Sp(\P^{\add}(\cat{C}_{\w}^{\heartsuit}))^{\otimes} $ to
    the full subcategory.
    According to~\cref{96ce752920} of \cref{53341890c5},
    the right adjoint of $ L \circ I $ induces an equivalence
    $ \cat{C}' \to \Sp(\P^{\add}(\cat{C}_{\w}^{\heartsuit}))^{\fin} $,
    so we deduce that
    its underlying functor $ f $ is an equivalence,
    which means that $ f^{\otimes} $ is itself an equivalence
    by \autocite[Remark~2.1.3.8]{HA}.
    Therefore, the composition
    \begin{equation*}
        \cat{C}^{\otimes}
        \xrightarrow[\sim]{j''^{\otimes}}
        \cat{C'}^{\otimes}
        \xleftarrow[\sim]{f^{\otimes}}
        (\Sp(\P^{\add}(\cat{C}_{\w}^{\heartsuit}))^{\fin})^{\otimes}
        \subset
        \Sp(\P^{\add}(\cat{C}_{\w}^{\heartsuit}))^{\otimes}
    \end{equation*}
    is the desired functor.
\end{proof}

Given a stable $ \infty $-category~$ \cat{C} $ equipped with a bounded weight structure
and a symmetric monoidal structure on its heart~$ \cat{C}_{\w}^{\heartsuit} $,
we can construct a stable symmetric monoidal structure on~$ \cat{C} $
by restricting the symmetric monoidal structure on $ \Sp(\P^{\add}(\cat{C}_{\w}^{\heartsuit})) $
to its full subcategory $ \Sp(\P^{\add}(\cat{C}_{\w}^{\heartsuit}))^{\fin}
\simeq \cat{C} $,
which is closed under the tensor product operations in $ \Sp(\P^{\add}(\cat{C}_{\w}^{\heartsuit})) $.
In particular, we can attach a symmetric monoidal structure to
the stable $ \infty $-category $ \Kb(\cat{B}) $
of \cref{6a1432a5bc}
for an additive symmetric monoidal category~$ \cat{B}^{\otimes} $.
\Cref{1c256e6a74} ensures that this construction is the only way to do this,
if it is required that the symmetric monoidal structure on~$ \cat{C} $ is
compatible with its weight structure and its restriction to
the heart~$ \cat{C}_{\w}^{\heartsuit} $
coincides with the given symmetric monoidal structure.

\begin{theorem}\label{b39219127c}
    Let $ \cat{C}^{\otimes} $ and~$ \cat{C}'^{\otimes} $ be a stable
    symmetric monoidal $ \infty $-categories whose underlying $ \infty $-categories
    are equipped with bounded compatible weight structures.
    Let $ f \colon \cat{C} \to \cat{C}' $ be a weight exact functor
    whose restriction to their hearts
    $ \cat{C}_{\w}^{\heartsuit} \to \cat{C}'^{\heartsuit}_{\w} $ admits
    a symmetric monoidal refinement
    $ g^{\otimes} \colon (\cat{C}_{\w}^{\heartsuit})^{\otimes}
    \to (\cat{C}'^{\heartsuit}_{\w})^{\otimes} $.
    Then $ f $ admits
    a canonical symmetric monoidal refinement
    whose restriction to their hearts coincides with~$ g^{\otimes} $.
\end{theorem}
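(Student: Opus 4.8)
The plan is to use \cref{1c256e6a74} to realize both $f$ and its hoped-for monoidal refinement inside the ambient presheaf categories attached to the hearts, where the symmetric monoidal functoriality of $\Sp(\P^{\add}(\text{--}))^{\otimes}$ is available. Concretely, I would first apply \cref{1c256e6a74} to each of $\cat{C}^{\otimes}$ and $\cat{C}'^{\otimes}$ to obtain canonical symmetric monoidal refinements of the fully faithful embeddings $\cat{C} \to \Sp(\P^{\add}(\cat{C}_{\w}^{\heartsuit}))$ and $\cat{C}' \to \Sp(\P^{\add}(\cat{C}'^{\heartsuit}_{\w}))$. By the discussion following \cref{53341890c5}, these embeddings identify $\cat{C}$ and $\cat{C}'$ with the full subcategories $\Sp(\P^{\add}(\cat{C}_{\w}^{\heartsuit}))^{\fin}$ and $\Sp(\P^{\add}(\cat{C}'^{\heartsuit}_{\w}))^{\fin}$, and by \cref{1c256e6a74} these identifications are symmetric monoidal, the source monoidal structures being the restrictions of the ambient ones.

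Next I would feed the given symmetric monoidal functor $g^{\otimes}$ into the functoriality of the construction $\Sp(\P^{\add}(\text{--}))^{\otimes}$ of \cref{c3edc72e8f} (only its $1$-categorical functoriality is needed). This produces a symmetric monoidal functor $G^{\otimes} \colon \Sp(\P^{\add}(\cat{C}_{\w}^{\heartsuit}))^{\otimes} \to \Sp(\P^{\add}(\cat{C}'^{\heartsuit}_{\w}))^{\otimes}$ whose underlying functor $G$ is the colimit-preserving, exact extension of $g$ and whose restriction to the heart recovers $g$. The key step is then to check that $G$ carries the fin subcategory of the source into that of the target: because $G$ is exact and sends the essential image of $\cat{C}_{\w}^{\heartsuit}$ into that of $\cat{C}'^{\heartsuit}_{\w}$, it sends the subcategory generated by the former under finite limits and colimits into the one generated by the latter, which is precisely the required containment. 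Restricting $G^{\otimes}$ to the fin subcategories---these being symmetric monoidal subcategories by the previous paragraph---yields a symmetric monoidal functor, which transports along the identifications to a symmetric monoidal functor $\cat{C}^{\otimes} \to \cat{C}'^{\otimes}$.

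It remains to identify the underlying functor and the heart restriction. By the explicit description of weight exact functors given after \cref{53341890c5}, the restriction of $G$ to the fin subcategory is, under the identifications above, the weight exact functor determined by $g$; by the equivalence of \cref{559c32c4d0} of \cref{53341890c5} this coincides with $f$, the unique weight exact functor restricting to $g$ on hearts. The restriction of the resulting functor to the hearts is $g^{\otimes}$ by the construction of $G^{\otimes}$, and canonicity is inherited from that of \cref{1c256e6a74}, the functoriality of $\Sp(\P^{\add}(\text{--}))^{\otimes}$, and restriction. I expect the one point requiring genuine care to be the fin-to-fin containment, though this reduces cleanly to exactness of $G$ together with its computed behavior on the hearts.
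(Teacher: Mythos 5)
Your proposal is correct and takes essentially the same route as the paper's proof: both construct $ G^{\otimes} $ from $ g^{\otimes} $ via the functoriality of $ \Sp(\P^{\add}(\text{--}))^{\otimes} $, identify $ \cat{C}^{\otimes} $ and $ \cat{C}'^{\otimes} $ with the symmetric monoidal structures on $ \Sp(\P^{\add}(\cat{C}_{\w}^{\heartsuit}))^{\fin} $ and $ \Sp(\P^{\add}(\cat{C}'^{\heartsuit}_{\w}))^{\fin} $ using \cref{1c256e6a74}, and recognize the restriction of $ G^{\otimes} $ as a symmetric monoidal refinement of $ f $ via the description of the correspondence given after \cref{53341890c5}. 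Your explicit verification that $ G $ carries one fin subcategory into the other (by exactness of $ G $ and its behavior on hearts) simply spells out a step the paper leaves implicit in citing that description.
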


\begin{proof}
    Let
    $ G^{\otimes} \colon \Sp(\P^{\add}(\cat{C}_{\w}^{\heartsuit}))^{\otimes}
    \to \Sp(\P^{\add}(\cat{C}'^{\heartsuit}_{\w}))^{\otimes} $
    be the symmetric monoidal functor determined by~$ g^{\otimes} $
    and the functoriality of the construction $ \Sp(\P^{\add}(\text{--}))^{\otimes} $.
    According to \cref{1c256e6a74} and the description of the correspondence of~\cref{559c32c4d0}
    given in the paragraph after the proof of \cref{53341890c5},
    the restriction
    $ (\Sp(\P^{\add}(\cat{C}_{\w}^{\heartsuit}))^{\fin})^{\otimes}
    \to (\Sp(\P^{\add}(\cat{C}'^{\heartsuit}_{\w}))^{\fin})^{\otimes} $
    of~$ G^{\otimes} $ can be
    regarded as a symmetric monoidal refinement of the functor~$ f $.
\end{proof}

\begin{remark}\label{eb827e5652}
    By taking the full subcategory of $ \cat{C}'^{\otimes} $
    spanned by objects which can be written as
    $ Y_1 \otimes \dotsb \otimes Y_n $
    for some $ Y_1 $, \dots, $ Y_n \in \cat{C}'^{\b} $ up to equivalence,
    we can see that the conclusion of \cref{b39219127c} still holds when
    the weight structure on $ \cat{C}' $ is not necessarily bounded.
\end{remark}

Applying this theorem to the case
$ \cat{C}'^{\otimes} = \Kb(\h\cat{C}_{\w}^{\heartsuit})^{\otimes} $,
we have the following main result of this paper:

\begin{corollary}\label{7704dfff4b}
    For a stable symmetric monoidal $ \infty $-category whose underlying
    stable $ \infty $-category has a bounded weight structure compatible
    with the symmetric monoidal structure,
    the weight complex functor has a symmetric monoidal refinement.
\end{corollary}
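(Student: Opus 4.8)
The plan is to apply \cref{b39219127c} with the target $ \cat{C}'^{\otimes} $ taken to be $ \Kb(\h\cat{C}_{\w}^{\heartsuit}) $ equipped with a suitable stable symmetric monoidal structure. First I would endow the ordinary additive category $ \h\cat{C}_{\w}^{\heartsuit} $ with the symmetric monoidal structure obtained by passing to the homotopy category of $ (\cat{C}_{\w}^{\heartsuit})^{\otimes} $. Since the formation of the homotopy category refines to a symmetric monoidal functor, the canonical functor $ \cat{C}_{\w}^{\heartsuit} \to \h\cat{C}_{\w}^{\heartsuit} $ acquires a symmetric monoidal refinement, which will play the role of $ g^{\otimes} $ in \cref{b39219127c}.

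Next I would put on $ \Kb(\h\cat{C}_{\w}^{\heartsuit}) $ the stable symmetric monoidal structure described in the paragraph preceding \cref{b39219127c}, namely the restriction of the symmetric monoidal structure on $ \Sp(\P^{\add}(\h\cat{C}_{\w}^{\heartsuit})) $ to the full subcategory $ \Sp(\P^{\add}(\h\cat{C}_{\w}^{\heartsuit}))^{\fin} \simeq \Kb(\h\cat{C}_{\w}^{\heartsuit}) $. By construction this structure is compatible with the canonical bounded weight structure of \cref{6a1432a5bc}, and its restriction to the heart $ \h\cat{C}_{\w}^{\heartsuit} $ recovers the symmetric monoidal structure fixed in the previous step.

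With these choices the weight complex functor $ f \colon \cat{C} \to \Kb(\h\cat{C}_{\w}^{\heartsuit}) $ of \cref{e28ce92cea} is weight exact, and by definition its restriction to the hearts is the canonical functor $ \cat{C}_{\w}^{\heartsuit} \to \h\cat{C}_{\w}^{\heartsuit} $, which is exactly the functor underlying $ g^{\otimes} $. All the hypotheses of \cref{b39219127c} are therefore met, and the theorem yields a canonical symmetric monoidal refinement of $ f $ restricting to $ g^{\otimes} $ on the hearts; this is the asserted refinement of the weight complex functor.

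I expect the only delicate point to be the bookkeeping of symmetric monoidal structures: one must confirm that the structure placed on $ \Kb(\h\cat{C}_{\w}^{\heartsuit}) $ is simultaneously compatible with its weight structure and restricts on the heart to the homotopy category of $ (\cat{C}_{\w}^{\heartsuit})^{\otimes} $. Both facts are immediate from the construction recalled before \cref{b39219127c}, so once that construction and \cref{b39219127c} are in hand the corollary follows with no further substantial work.
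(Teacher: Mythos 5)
Your proposal is correct and takes essentially the same route as the paper: the paper's proof of this corollary is precisely the application of \cref{b39219127c} with $ \cat{C}'^{\otimes} = \Kb(\h\cat{C}_{\w}^{\heartsuit})^{\otimes} $, where $ \Kb(\h\cat{C}_{\w}^{\heartsuit}) $ carries the symmetric monoidal structure constructed in the paragraph preceding that theorem and $ g^{\otimes} $ is the symmetric monoidal refinement of the canonical functor $ \cat{C}_{\w}^{\heartsuit} \to \h\cat{C}_{\w}^{\heartsuit} $. Your write-up merely makes explicit the bookkeeping (compatibility of that structure with the weight structure of \cref{6a1432a5bc}, and its restriction to the heart) that the paper leaves implicit.
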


\printbibliography

\end{document}